\def\H{{\mathcal H}}
\def\al{{\alpha}}
\DeclareMathOperator{\supp}{supp}
\DeclareMathOperator{\vect}{span}
\def\N{{\mathbb{N}}}
\def\R{{\mathbb{R}}}
\def\d{\,{\mathrm{d}}}
\newcommand{\norm}[1]{{\left\|{#1}\right\|}}
\newcommand{\ent}[1]{{\left[{#1}\right]}}
\DeclareMathOperator{\sinc}{sinc}
\DeclareMathOperator{\pv}{p.v.}
\newcommand{\va}[1]{\psi_{#1,c}}
\newcommand{\ja}[1]{j_{#1,c}}
\newcommand{\jc}[1]{{k}^{\alpha}_{#1,c}}
\newtheorem{lemma}{Lemma}[section]
\newtheorem{proposition}[lemma]{Proposition}
\newtheorem{theorem}[lemma]{Theorem}
\newtheorem{corollary}[lemma]{Corollary}
\newdefinition{definition}[lemma]{Definition}
\newdefinition{example}[lemma]{Example}
\newdefinition{remark}[lemma]{Remark}
\newdefinition{notation}[lemma]{Notation}
\newproof{proof}{Proof}
\newproof{proofth}{Proof of Theorem~\ref{thm:extension}}
\begin{document}
\title{Almost Everywhere Convergence of Prolate Spheroidal Series}

\author{Philippe Jaming\corref{cor1}}
%\address{Institut de Math{\'e}matiques de Bordeaux \\Universit{\'e} de Bordeaux \\351, cours de la Lib{\'e}ration, 33405 TALENCE \\ France }
\ead{philippe.jaming@u-bordeaux.fr}
\author{Michael Speckbacher\fnref{fn2}}
\address{Institut de Math{\'e}matiques de Bordeaux, Universit{\'e} de Bordeaux \\351, cours de la Lib{\'e}ration, 33405 TALENCE \\ France }
\ead{speckbacher@kfs.oeaw.ac.at}

\cortext[cor1]{Corresponding author}
\fntext[fn2]{ M.S. was supported by the Austrian Science Fund (FWF) through an Erwin-Schr{\"{o}}dinger Fellowship (J-4254).}
\begin{keyword}
Prolate spheroidal wave functions\sep almost everywhere convergence\sep Paley-Wiener type spaces\sep Hankel transform\sep spherical Bessel functions
\MSC[2010]{42B10, 42C10, 44A15}
\end{keyword}

\begin{abstract}
\noindent In this paper, we show that the expansions of functions from $L^p$-Paley-Wiener type spaces in terms of the prolate spheroidal wave functions converge almost everywhere for $1<p<\infty$, even in the cases when they might not converge in $L^p$-norm. We thereby consider the classical Paley-Wiener spaces $PW_c^p\subset L^p(\R)$ of functions whose Fourier transform is supported in $[-c,c]$ and Paley-Wiener like spaces $B_{\al,c}^p\subset L^p(0,\infty)$ of functions whose Hankel transform $\H^\al$ is supported in $[0,c]$.
As a side product, we show the continuity of the projection operator $P_c^\al f:=\H^\al(\chi_{[0,c]}\cdot \H^\al f)$ from $L^p(0,\infty)$ to $L^q(0,\infty)$, $1<p\leq q<\infty$.
\end{abstract}

\maketitle

\section{Introduction}

The prolate spheroidal wave functions (PSWF) form an orthonormal basis of $L^2(\R)$ that is best concentrated in the time-frequency plane. They were first studied in  the seminal work of Landau, Pollak, and Slepian \cite{lapo61,lapo62,slepo61,sle83} at Bell Labs which inspired extensive research  due to their efficiency as a tool for signal processing.
They have since found further applications
in various fields such as random matrix theory ({\it e.g.} \cite{dy62,me04}), spectral estimation \cite{tho82,abro17}
or numerical analysis ({\it e.g.} \cite{XRY, Wa2}).

While the natural setting for prolate spheroidal wave functions is $L^2(\R)$, the question  of convergence of their series expansions arises in $L^p(\R)$. This issue has been solved by Barcel\'o and Cordoba \cite{baco89} who showed
that the expansion of an $L^p$-band limited function $f$ in the PSWF-basis converges to $f$ whenever $4/3<p<4$,
and that this range of $p$'s is optimal. That result was recently extended in \cite{bojaso19} to several natural variants of the prolates like the Hankel prolates.

The aim of this paper is to continue this work by investigating almost-everywhere convergence properties of PSWFs expansions of functions in
$L^p(\R)$. This question is very natural in view of Carleson's fundamental result about Fourier series
\cite{carl66} which was extended to several orthonormal bases of polynomials, {\it e.g.} by
Pollard for Legendre series \cite{po72}. Their almost everywhere convergence results follow from very delicate analysis of the maximal operator associated to the expansions considered. On the other hand, the situation
is much better for expansions in terms of spherical Bessel functions ({\it see} \cite{cigupeva99,civa09}) where almost-everywhere convergence is surprisingly simple once mean convergence has been established. The key here is the fast decay of Bessel functions with respect to its parameter.
Our main result is to show that the situation is similar for PSWF series since the PSWF-basis can be nicely expressed in terms of spherical Bessel functions.

\smallskip

Let us now be more precise and introduce some notation before giving the exact statements.
 For $f\in L^1(\R)$, the Fourier transform  is given by 
$$
\widehat{f}(\xi)=\mathcal{F}(f)(\xi)=\int_\R f(t)e^{-2\pi i \xi t}\d t,
$$
and the definition  extends to $L^2(\R)$ and $\mathcal{S}'(\R)$ (the space of tempered distributions) in the usual way.
The Paley-Wiener spaces of band-limited functions are denoted by
$$
PW_c^{p} =\left\{f\in L^p(\R):\ \supp \big(\widehat f\hspace{0.1cm} \big)\subseteq[-c,c]\right\},
$$
where the Fourier transform is to be understood in the distributional sense whenever $p>2$. The projection 
$P_cf=\mathcal{F}^{-1}\big(\chi_{[-c,c]}\cdot\mathcal{F}(f)\big)$ is a continuous operator from $L^p(\R)$ to $PW_c^p$, $1<p<\infty$.

The Prolate Spheroidal Wave Functions (PSWFs) are eigenfunctions of an integral operator and, using the min-max theorem, can be defined by the following extremal problem
$$
\psi_{n,c}:=\mathrm{argmax}\left\{\frac{\norm{f}_{L^2(-1,1)}}{\norm{f}_{L^2(\R)}}\,: f\in PW_c^2,\ 
f\in\vect\{\psi_{k,c}:\ k<n\}^\perp\right\}.
$$
The family $(\psi_{n,c})_{n\geq 0}$ forms an orthonormal basis for $PW_c^2$ and    satisfies also
$$
\int_{-1}^1\psi_{n,c}(t)\overline{\psi_{m,c}(t)}dt=\lambda_n \delta_{n,m} ,
$$
which is often referred to as double orthogonality.

The central object that we study in this paper is given by 
$$
\Psi_N f:=\sum_{n=0}^N \langle f,\psi_{n,c}\rangle \psi_{n,c}.
$$
It was shown in 
\cite{baco89} that $\Psi_Nf\rightarrow f$, $N\rightarrow\infty$, in $L^p(\R)$-norm for every $f\in PW_c^p$ if and only if $\frac{4}{3}<p<4$. Our main contribution in this paper is that the series $\Psi_Nf$ converges also almost everywhere, and that the range of convergence extends to $1\leq p<\infty$. Moreover, for general functions $f\in L^p(\R)$, $\Psi_N f$ converges almost everywhere to $P_cf$.
\begin{theorem}\label{thm:extension}
If $1< p<\infty$, and $f\in L^p(\R)$, then $\Psi_Nf\rightarrow P_cf$ almost everywhere. For $p=1$, we have that  $\Psi_Nf\rightarrow f$ almost everywhere for every $f\in PW^1_c$.
\end{theorem}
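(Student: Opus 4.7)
The strategy is to exploit the representation of the PSWFs in a spherical Bessel basis of $PW_c^2$, as hinted in the introduction, and to extract a.e.\ convergence from the fast decay of the corresponding expansion coefficients. The first step is to reduce the general $L^p$ statement to the band-limited case: since $\widehat{\psi_{n,c}}$ is supported in $[-c,c]$, a distributional Plancherel argument gives $\langle f,\psi_{n,c}\rangle=\langle P_c f,\psi_{n,c}\rangle$ for any $f\in L^p(\R)$, hence $\Psi_N f=\Psi_N(P_c f)$. Combined with continuity of $P_c\colon L^p(\R)\to PW_c^p$ for $1<p<\infty$, this reduces Theorem~\ref{thm:extension} to proving $\Psi_N g\to g$ almost everywhere whenever $g\in PW_c^p$, $1\le p<\infty$.

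\textbf{Bessel rewriting and pointwise control.} The key tool is the classical Legendre expansion
$$
\psi_{n,c}(x)=\sum_{k\equiv n\,(\mathrm{mod}\,2)} d_k^{n,c}\sqrt{k+\tfrac{1}{2}}\,P_k(x),\qquad x\in[-1,1],
$$
whose coefficients $d_k^{n,c}$ decay super-exponentially in $k$. Fourier-transforming the truncated Legendre polynomials produces spherical Bessel functions, extending the identity to $\R$ as an expansion of $\psi_{n,c}$ in an orthonormal system $\{\varphi_k^c\}$ of $PW_c^2$. For fixed $x\in\R$, the values $\varphi_k^c(x)$ decay polynomially in $|x|$ and super-exponentially in $k$ once $k\gg c|x|$, embodying the rapid decay of Bessel functions in their parameter highlighted in the introduction. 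Substituting this expansion and formally interchanging the orders of summation yields
$$
\Psi_N g(x)=\sum_{k=0}^{\infty}\Bigl(\sum_{n=0}^N \langle g,\psi_{n,c}\rangle\,d_k^{n,c}\Bigr)\varphi_k^c(x),
$$
where, for each fixed $x$, the tail in $k$ is pointwise dominated (uniformly in $N$) by an absolutely convergent series, while the main part of the sum reduces to a partial sum of a spherical Bessel series for $g$, whose almost-everywhere convergence is accessible by the methods of \cite{cigupeva99,civa09}.

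\textbf{Main obstacle.} The delicate point is that outside the Barcel\'o--C\'ordoba range $4/3<p<4$, the PSWF series does not converge in $L^p$ \cite{baco89}, so the classical recipe ``uniform maximal estimate plus convergence on a dense subset'' is not available. The proof must therefore extract a.e.\ convergence directly from quantitative decay estimates, carefully balancing the super-exponential decay of the Legendre coefficients $d_k^{n,c}$ against the pointwise size of $\varphi_k^c(x)$, and making this balance uniform in $N$ while controlling all ingredients solely through $\|g\|_{L^p}$. Turning the heuristic ``Bessel tails are negligible'' into such a uniform pointwise bound, and making the interchange of summations rigorous on a set of full measure, is the technical heart of the argument.
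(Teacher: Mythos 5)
Your opening reduction ($\langle f,\psi_{n,c}\rangle=\langle P_cf,\psi_{n,c}\rangle$, hence $\Psi_Nf=\Psi_N(P_cf)$) and your choice of tool (expanding $\psi_{n,c}$ in spherical Bessel functions and exploiting decay) are exactly the right starting points, and they match the paper. But the proposal stops at the point where the actual proof begins, and the route you sketch for the remainder is both unjustified and harder than necessary. First, the decay you invoke is not quite the relevant one: for the sum over $n$ what matters is not that the Legendre/Bessel coefficients of a \emph{fixed} $\psi_{n,c}$ decay super-exponentially in $k$, but the uniform near-diagonal estimate $|b_k^n|\lesssim n^{-|k-n|}$ (with $|b_0^n|\lesssim n^{-2}$) from Barcel\'o--C\'ordoba. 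Combined with the elementary pointwise bound $|j_{k,c}(x)|\lesssim k^{-2}$ for fixed $x$, this yields $|\psi_{n,c}(x)|\lesssim n^{-2}$ uniformly on compacts, and then H\"older together with $\|\psi_{n,c}\|_{L^q}\lesssim n^{\gamma_q}$, $\gamma_q<1$, shows that $\sum_n|\langle f,\psi_{n,c}\rangle|\,|\psi_{n,c}(x)|$ converges \emph{absolutely at every point} for every $f\in L^p$, $1<p<\infty$. This is the decisive observation: there is no maximal-operator issue to circumvent at all, and your proposed detour --- interchanging the $n$ and $k$ sums to reduce to a partial sum of a spherical Bessel series and then citing the a.e.\ theory of \cite{cigupeva99,civa09} --- is not needed and, as stated, not valid: for finite $N$ the inner sum $\sum_{n\le N}\langle g,\psi_{n,c}\rangle d_k^{n,c}$ is not the $k$-th Bessel coefficient of $g$, so the rearranged expression is not a partial sum of the Bessel expansion of $g$.

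Second, the proposal contains no mechanism for identifying the pointwise limit with $P_cf$; your closing paragraph explicitly defers this ``technical heart,'' and you even dismiss the dense-subset strategy that in fact carries the day. Once everywhere-absolute convergence is in hand, the identification is soft: for $\tfrac43<p<4$ the norm convergence $\Psi_N P_cf\to P_cf$ of \cite{baco89} gives an a.e.-convergent subsequence, so the everywhere-defined limit equals $P_cf$ a.e.; for $1\le p\le 2$ one uses $PW_c^p\subset PW_c^2$; and for $p>2$ one approximates $P_cf$ in $L^p$ by a sequence in $PW_c^2$ and interchanges the limits in $k$ and $N$ using the summable dominating series $\|f\|_{L^p}\sum_n n^{\gamma_q-2}$ (dominated convergence for series). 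Without the everywhere-pointwise bound and without this limit-identification step, the proposal is a plan rather than a proof, and the plan as written points toward a substantially more delicate argument than the one that actually works.
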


\noindent For  $f\in L^1(0,\infty)$, the Hankel transform is given by 
$$
\mathcal H^{\alpha}f(x):=\int_0^{\infty}f(y)\sqrt{xy}J_{\alpha}(xy) \d y,
$$
where 
$J_\alpha$ denotes the Bessel function of first kind and order $\alpha>-\frac{1}{2}$. Like the Fourier transform, the 
Hankel transform extends to a unitary operator on $L^2(0,\infty)$ and in a distributional sense to $L^p(0,\infty)$, $p>2$. Similar to the Fourier transform, a function cannot be confined to a finite interval in both time and Hankel domain. See \cite{bow71,ghoja11,rovo99}, for further uncertainty principles for the Hankel transform. 

The Paley-Wiener type  spaces $B_{\al,c}^p$ are defined as
$$
B_{\alpha,c}^p :=\left\{f\in L^p(0,\infty):\ \supp \big(\mathcal{H}^{\alpha}(f)\big)\subseteq[0,c]\right\},
$$
and the band-limiting projection operator is given by $P^\al_c f=\H^\al(\chi_{[0,c]}\cdot \H^\al f)$. We will show in Theorem~\ref{thm:projection-cont} that this operator is bounded on $L^p(0,\infty)$, for $1<p<\infty$.
Finally, the Circular (Hankel) Prolate Spheroidal Wave Functions (CPSWFs) were first introduced and studied in \cite{sle64}. They are defined by
$$
\varphi_{n,c}^\alpha:=\mathrm{argmax}\left\{\frac{\norm{f}_{L^2(0,1)}}{\norm{f}_{L^2(0,\infty)}}\,: f\in B_{\alpha,c}^2 ,\ 
f\in\vect\{\varphi_{k,c}^\alpha:\ k<n\}^\perp\right\}.
$$
The family $(\varphi_{n,c}^\alpha)_{n\geq 0}$ forms an orthonormal basis of $B_{\alpha,c}^2 $.
Note also that when $\alpha=1/2$, these are usual PSFWs, more precisely, $\varphi_{n,c}^{1/2}=\psi_{2n,c}$. Again we are interested in expansions of $f\in B_{\al,c}^p$ with respect to the CPSWFs, {\it i.e.} in the convergence of 
$$
\Phi_N f:=\sum_{n=0}^N \langle f,\varphi_{n,c}^\al\rangle \varphi_{n,c}^\al.
$$
It is shown in \cite[Theorem~5.6]{bojaso19}
that $\Phi_N f\rightarrow f $ in $L^p(0,\infty)$-norm for every $f\in B_{\al,c}^p$ if and only if $\frac{3}{4}<p<4$.  Like for the classical Paley-Wiener space, the series converges also almost everywhere and the range of convergence extends to $1\leq p<\infty$ for functions in $B_{\al,c}^p$.  
%The projection operator $P_c^\al$ is only known to be continuous in the range
%$$
%m(\alpha):=4\frac{\al+1}{2\al+3}<p< M(\al):=4\frac{\al+1}{2\al+1}.
%$$
%The convergence result for general $L^p(0,\infty)$ functions has therefore be restricted to this range.

\begin{theorem}\label{thm:extension2}
Let $\alpha>-\frac{1}{2}$. If ${1< p<\infty}$, then $\Phi_Nf\rightarrow P^\al_c f$ almost everywhere for every $f\in L^p(0,\infty)$. Moreover, if  $p=1$, then $\Phi_Nf\rightarrow  f$ almost everywhere for every $f\in B^p_{\al,c}$.
\end{theorem}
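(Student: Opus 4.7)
The plan is to parallel the proof of Theorem~\ref{thm:extension}, exchanging the Fourier transform for the Hankel transform $\H^\al$ and the PSWFs for the CPSWFs. Three tools are available: the $L^p$-boundedness of the projection $P_c^\al$ from Theorem~\ref{thm:projection-cont}; the $L^p$-mean convergence $\Phi_N f\to f$ on $B_{\al,c}^p$ for $3/4<p<4$ from \cite[Theorem~5.6]{bojaso19}; and an expansion of the CPSWFs in terms of spherical Bessel functions, which, exactly as in \cite{cigupeva99,civa09}, is the mechanism that upgrades mean convergence to almost-everywhere convergence on the full range $1<p<\infty$.

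First I would reduce to $f\in B_{\al,c}^p$: each $\varphi_{n,c}^\al$ is smooth with Bessel-type decay at infinity and thus lies in $L^q(0,\infty)$ for every $1<q<\infty$, so the pairings $\scal{f,\varphi_{n,c}^\al}$ make sense for $f\in L^p(0,\infty)$. Using $\varphi_{n,c}^\al=P_c^\al\varphi_{n,c}^\al$ together with the self-adjointness of $P_c^\al$ extended from $L^2$ by density thanks to Theorem~\ref{thm:projection-cont}, one obtains $\scal{f,\varphi_{n,c}^\al}=\scal{P_c^\al f,\varphi_{n,c}^\al}$, so $\Phi_N f=\Phi_N(P_c^\al f)$. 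It is therefore enough to prove $\Phi_N g\to g$ almost everywhere for $g\in B_{\al,c}^p$, which simultaneously handles $1<p<\infty$ and the direct case $p=1$ with $f\in B_{\al,c}^1$. On the dense subspace $\vect\{\varphi_{n,c}^\al: n\geq 0\}\subset B_{\al,c}^p$ the partial sums are eventually constant, so by a standard density argument everything reduces to a weak-type maximal inequality
$$\left\|\sup_{N\geq 0}\abs{\Phi_N f}\right\|_{L^{p,\infty}(0,\infty)}\lesssim \|f\|_{L^p(0,\infty)}\qquad\text{for every }f\in B_{\al,c}^p.$$

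Inside the mean-convergence range $3/4<p<4$, this inequality follows formally from the uniform $L^p$-boundedness of $\Phi_N$ from \cite{bojaso19}. Outside that range, the spherical Bessel structure of the CPSWFs has to be used: writing $\varphi_{n,c}^\al=\sum_k d_{n,k}^\al\,\xi_k^\al$ in an orthonormal basis $(\xi_k^\al)_k$ of $B_{\al,c}^2$ built from spherical Bessel functions (for instance the renormalized sampling functions at the zeros of $J_\al(c\,\cdot)$), the matrix $(d_{n,k}^\al)$ should decay rapidly once $\max(n,k)$ exceeds the effective bandwidth associated to $c$. The plan is then to split $\Phi_N f$ into a finite low-index block, estimated crudely by the $L^q$-boundedness of a fixed finite family of CPSWFs, and a high-index tail whose uniform bound in $N$ one obtains by re-expressing it in the $\xi_k^\al$ basis and using the rapid decay of $d_{n,k}^\al$. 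The main obstacle is to turn this reorganization into an actual pointwise bound on $\sup_N\abs{\Phi_N f}$: one needs a summable pointwise almost everywhere majorant for $\sum_k\abs{\xi_k^\al(x)}$ coming from the large-argument asymptotics of $J_\al$, and the estimate must remain valid at the endpoint $p=1$, where Theorem~\ref{thm:projection-cont} is unavailable and one has to stay inside $B_{\al,c}^1$ from the outset.
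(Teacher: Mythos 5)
Your reduction $\Phi_N f=\Phi_N(P_c^\al f)$ via the self-adjointness and $L^p$-boundedness of $P_c^\al$ matches the paper, but the step you then hinge everything on --- a weak-type maximal inequality for $\sup_N|\Phi_N f|$ --- is both unproved and, as it turns out, unnecessary. Uniform $L^p$-boundedness of the partial-sum operators $\Phi_N$ (which is what \cite{bojaso19} provides for $3/4<p<4$) does \emph{not} ``formally'' yield a weak-type bound for the maximal operator: this is precisely the gap between M.~Riesz's theorem and Carleson's theorem for classical Fourier series, where the partial sums are uniformly bounded on $L^p$ for all $1<p<\infty$ and yet the maximal inequality is the entire difficulty. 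Outside the range $3/4<p<4$ your plan is only a heuristic (an auxiliary basis with an unproven decay of the change-of-basis matrix and an unidentified pointwise majorant), and you yourself flag the pointwise control of $\sup_N|\Phi_N f|$ as ``the main obstacle.'' So the central analytic step of the proposal is missing.

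The idea you are missing --- and the reason the paper's argument is short --- is that no maximal function is needed because the series converges \emph{absolutely at every point}. The coefficients in $\varphi_{n,c}^\al=\sum_m b_m^n\,\jc{m}$ satisfy $|b_0^n|\lesssim n^{-2}$ and $|b_m^n|\lesssim n^{-|m-n|}$, while for fixed $x$ the bound $|J_\nu(cx)|\lesssim (c|x|/2)^{\nu}/\bigl(2^\nu\Gamma(\nu+\tfrac12)\bigr)$ (decay in the \emph{order} $\nu$) gives $|\jc{m}(x)|\lesssim m^{-2}$; combining the two, exactly as in Lemma~\ref{lem:pointwise}, yields $|\varphi_{n,c}^\al(x)|\lesssim n^{-2}$ uniformly on compact sets. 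Together with $\|\varphi_{n,c}^\al\|_{L^q(0,\infty)}\lesssim n^{\gamma_q}$, $\gamma_q<1$, H\"older's inequality gives
$$
\sum_{n\geq 0}\bigl|\langle f,\varphi_{n,c}^\al\rangle\bigr|\,\bigl|\varphi_{n,c}^\al(x)\bigr|\lesssim \|f\|_{L^p(0,\infty)}\sum_{n\geq 0} n^{\gamma_q-2}<\infty,
$$
so $\Phi_N f$ converges everywhere to some function $h$. All that remains is to identify $h$ with $P_c^\al f$: for $p$ in the mean-convergence range a subsequence of $\Phi_N P_c^\al f$ converges a.e.\ to $P_c^\al f$, forcing $h=P_c^\al f$ a.e.; for $1<p\leq 2$ and $p=1$ one uses $P_c^\al f\in B^2_{\al,c}$, respectively $B^1_{\al,c}\subset B^2_{\al,c}$; and for $2<p<\infty$ one approximates $P_c^\al f$ in $L^p$ and a.e.\ by a sequence in $B^2_{\al,c}$ (density being supplied by Theorem~\ref{thm:projection-cont}) and interchanges the two limits by dominated convergence, using the same absolutely summable majorant. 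Your proposal would need to be rebuilt around this pointwise decay rather than around a maximal inequality.
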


\begin{comment}\noindent We would like to mention here that our proofs rely on expressing the PSWF and CPSWF  in terms of two families of spherical Bessel functions  defined in \eqref{eq:sphbess1}, and \eqref{eq:sphbess2} respectively. Results concerning the almost everywhere convergence of spherical Bessel series can be found for example in  \cite[Theorem 3]{cigupeva99}.%   almost everywhere convergence of $K_N f:=\sum_{n=0}^N\langle f,\jc{n}\rangle \jc{n}$ to $f\in B_{\al,c}^p$, is proven for ${\max\{m(\al),\frac{3}{4}\}<p<\min\{M(\al),4\}}$.
\end{comment}

This paper is organized as follows. In Section~\ref{sec:prel}, we collect some properties on spherical Bessel functions and the Muckenhoupt class that we will need for the proof of our main results. We then prove Theorem~\ref{thm:extension} in Section~\ref{sec:main}, and conclude with illustrating how this proof has to be adapted to show Theorem~\ref{thm:extension2} in Section~\ref{sec:circular}.

\medskip

As usual, we will write $A\lesssim B$ if there is a constant $C$ such that $A\leq CB$ and $A\sim B$ if
$A\lesssim B$ and $B\lesssim A$.

\section{Preliminaries}\label{sec:prel}
\subsection{Properties of spherical Bessel functions}\label{sec:Bessel}
\noindent In this section we   define two families of spherical Bessel functions tailored to form  orthonormal bases for the spaces $PW_c^2$ and $B_{\alpha,c}^2$ respectively.

For $\alpha>-1/2$, the Bessel functions of the first kind can be defined through the Poisson representation, {\it see e.g.} \cite[10.9.4]{NIST10}, 
$$
J_\alpha(x)=\frac{x^\alpha}{\pi^{1/2}2^\alpha\Gamma(\alpha+\frac{1}{2})}\int_0^1(1-t^2)^{\alpha-\frac{1}{2}}\cos(xt)\d t.
$$
Recall that $J_\alpha$ satisfies the pointwise bound
\begin{equation}\label{eq:est:T_a}
|J_\alpha(x)|\lesssim \frac{|x|^{\alpha+1/2}}{2^\alpha\Gamma(\alpha+\frac{1}{2})}.
\end{equation}
It is shown in \cite{baco89} and \cite{bojaso19} that for $1<p<\infty$
\begin{equation}
	\label{eq:lpnorm}
\norm{x^{-1/2}J_{n+1/2}}_{L^p(\R)} %\sim	\norm{x^{-1/2}J_{2n+\alpha+1}}_{L^p(0,\infty)} 
\sim \begin{cases}
		n^{-1+\frac{1}{p}}&\mbox{when }1<p<4\\
		n^{-\frac{3}{4}}\log n&\mbox{when }p=4\\
		n^{-\frac{5}{6}+\frac{1}{3p}}&\mbox{when }p>4
	\end{cases}.
\end{equation}
Note that the nature of the right hand side shows that $\norm{x^{-1/2}J_{2n+\alpha+1}}_{L^p(0,\infty)}$
satisfies the same bounds though with different constants.

The classical (dilated) spherical Bessel functions are defined by
\begin{equation}
\label{eq:sphbess1}
\ja{n}(x):=\sqrt{\frac{2n+1}{2}}\frac{J_{n+1/2}(cx)}{\sqrt{x}}.
\end{equation}
They  satisfy the orthogonality relations
$$
\int_{\R}j_{n,c}(x)j_{m,c}(x)\d x=\delta_{n,m},
$$
and their Fourier transforms are given by 
$$
\widehat{j_{n,c}}(\xi)=(-1)^n \sqrt{\frac{2n+1}{\pi c}}P_n\left(\frac{\xi}{c}\right)\cdot\chi_{[-c,c]}(\xi),
$$
where $P_n$ denotes the Legendre polynomial of degree $n$. For $p>1$, one has that $j_{n,c}\in L^p(\R)$ and consequently $j_{n,c}\in PW_c^p$.
%For $1<p<\infty$
%\begin{equation}
%	\label{eq:lpnorm}
%	\norm{j_{n,c}}_{L^p(\R)} \sim \begin{cases}
%		n^{-\frac{1}{2}+\frac{1}{p}}&\mbox{when }1<p<4\\
%		n^{-\frac{1}{4}}\log n&\mbox{when }p=4\\
%		n^{-\frac{1}{3}+\frac{1}{3p}}&\mbox{when }p>4
%	\end{cases}.
%\end{equation}
Moreover, by \eqref{eq:lpnorm} there exists $\gamma_p<\frac{1}{2}$ such that $\|j_{n,c}\|_{L^p(\R)}\lesssim n^{\gamma_p}$, for every $1<p<\infty$, and \eqref{eq:est:T_a} implies that, for $x$ fixed,
\begin{equation}\label{eq:pointwise-bessel}
|\ja{n}(x)|\lesssim \frac{\sqrt{2n+1}}{n!} \left(\frac{c|x|}{2}\right)^n\lesssim n^{-2}.
\end{equation}
The second family of spherical Bessel functions given by 
\begin{equation}
\label{eq:sphbess2}
\jc{n}(x):=\sqrt{2(2n+\alpha+1)}\frac{J_{2n+\alpha+1}(cx)}{\sqrt{x}},
\end{equation}
obeys the orthogonality relation 
$$
\int_{0}^{\infty}\jc{n}(x)\jc{m}(x)\d x=\delta_{n,m}.
$$
Their Hankel transforms are given by 
\begin{equation}
\label{eq:besseljacobi}
\mathcal H^{\alpha}(\jc{n})(x)=\sqrt{\frac{2(2n+\alpha+1)}{c}}\left(\frac{x}{c}\right)^{\alpha+\frac{1}{2}}
P_n^{(\alpha,0)}\left(1-2\left(\frac{x}{c}\right)^2\right)
\chi_{[0,c]}(x),
\end{equation}
 where $P_n^{(\alpha,0)}$ denotes the Jacobi polynomials of degree $n$ and
parameter $\alpha$, normalized so that
$P_n^{(\alpha,0)}(1)=\frac{\Gamma(n+\alpha+1)}{\Gamma(n+1)\Gamma(\alpha+1)}$, see for example \cite{Slepian3}.
As before,  for $1<p<\infty$,   the $L^p(0,\infty)$ norms of $\jc{n}$ are bounded like
%\begin{equation}
%	\label{eq:lpnorm-hl}
%	\norm{\jc{n}}_{L^p(0,\infty)} \sim \begin{cases}
%		n^{-\frac{1}{2}+\frac{1}{p}}&\mbox{when }1<p<4\\
%		n^{-\frac{1}{4}}\log n&\mbox{when }p=4\\
%		n^{-\frac{1}{3}+\frac{1}{3p}}&\mbox{when }p>4
%	\end{cases},
%\end{equation}
%which again ensures that there is $0\leq \gamma_p<1$ such that 
$	\norm{\jc{n}}_{L^p(0,\infty)} \lesssim n^{\gamma_p}$ for some $0\leq \gamma_p<1$. Hence, $\jc{n}\in B_{\alpha,c}^p$. By \eqref{eq:est:T_a} we also get that
\begin{equation}\label{eq:pointwise-bessel2}
|\jc{n}(x)|\lesssim \frac{\sqrt{2n+\alpha+1}}{\Gamma(2n+\alpha+1)} \left(\frac{c|x|}{2}\right)^{2n}\lesssim n^{-2},
\end{equation}
where the last estimate holds for $x$ fixed with a constant that depends on $x$ (which can be chosen uniformly if $x$ is in a compact set).

\subsection{The Hilbert Transform on Weighted ${L^p}$-spaces}\label{sec:hilbert-trafo}

Let $J\subset \R$ be an interval. The class of  Muckenhoupt weights $A^p(J)$, $1<p<\infty$, consists of all functions $\omega\,: J\to\R_+$  such that 
$$
\ent{\omega}_{p}:=
\sup_K\left(\frac{1}{|K|}\int_K\omega(x)\,\mbox{d}x\right)\left(\frac{1}{|K|}\int_K\omega(x)^{-\frac{q}{p}}\,\mbox{d}x\right)^{\frac{p}{q}}<\infty,
$$
where the supremum is taken over all finite length intervals $K\subset J$.
%The quantity $\ent{\omega}_{A^p}$ is called the $A^p$-characteristic of $\omega$ (or $A^p$ norm, though it is not a norm).

The Hilbert transform is defined as
$$
H f(x)=\frac{1}{\pi}\int_J\frac{f(y)}{x-y}\,\mathrm{d}y,
$$
where the integral has to be taken in the principal value sense.

Hunt, Muckenhaupt and Wheeden \cite{humuwhe73} showed that the Hilbert transform extends to a bounded linear operator on
$L^p(J,\omega)\to L^p(J,\omega)$ if and only if $\omega$ is an $A^p(J)$ weight, and the sharp dependence of the operator norm on $\ent{\omega}_{p}$ was established by Petermichl \cite{petermichl}. Let us also recall the well known fact that $x^\beta\in A^p(0,\infty)$ if and only if $-1<\beta<p-1$.

\section{Proof of Theorem~\ref{thm:extension}}\label{sec:main}

\noindent The prolate spheroidal wave functions $(\va{n})_{n\geq 0}$ can be expressed in terms of the spherical Bessel functions $(\ja{n})_{n\geq 0}$ as
$$
\va{n}=\sum_{k\geq 0}b_k^n\ \ja{k}.
$$
It follows from  \cite[Eqs. (8) \& (9)]{baco89} that there exists $n_0\in\N$ such that for all $n\geq n_0$, the numbers $b_k^n$ satisfy
 %\begin{multicols}{2}
\begin{enumerate}[(i)]
\item $|b_0^n|\lesssim  n^{-2}$,\label{enum:1}
\item for $k\geq 1$, $|b_k^n|\lesssim n^{-|k-n|}$.\label{enum:2}
%\item $|b_n^n|^2=1-\eta$, for $0<\eta\lesssim n^{-2}$,\label{enum:3}
%\item for every $n$, $|b_k^n|\lesssim k^{-2}$. \label{enum:4}
\end{enumerate}
%\end{multicols}
Note that indexes of the base change coefficients in \cite{baco89} are shifted, and that these estimates can  also be obtained from \cite{bojaso19}.
It was proven in  \cite[Lemma~2.6]{bojaso19} that if the above conditions are satisfied, then  $\|\va{n}\|_{L^p(\R)}\lesssim n^{\gamma_p}$.

\begin{lemma}\label{lem:pointwise}
If $1<p<\infty$, then 
$\Psi_N f$ converges absolutely in every point (and uniformly on compact sets) to some function $h$ for every $f\in L^p(\R)$. 
\end{lemma}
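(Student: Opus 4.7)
The plan is to establish a pointwise bound $|\psi_{n,c}(x)|\lesssim n^{-2}$ that is uniform on compact sets, and then couple it with Hölder's inequality and the known $L^{p'}$ norm estimate on $\psi_{n,c}$ to show that the general term of $\Psi_N f$ is dominated, uniformly on compacta, by a summable sequence.

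First I would plug the expansion $\psi_{n,c}=\sum_{k\geq 0} b_k^n\, j_{k,c}$ into $|\psi_{n,c}(x)|$, separate the $k=0$ contribution, and use the coefficient bounds (i)-(ii) together with the pointwise decay \eqref{eq:pointwise-bessel} of $j_{k,c}(x)$ (whose implicit constant can be taken uniform on compact subsets of $\R$) to obtain
$$
|\psi_{n,c}(x)|\lesssim n^{-2}+\sum_{k\geq 1} n^{-|k-n|}\, k^{-2}.
$$
Splitting the sum into $k\leq n/2$, $n/2<k<2n$, and $k\geq 2n$, the middle block contributes $\lesssim n^{-2}\sum_{j\in\Z}n^{-|j|}\lesssim n^{-2}$, while the two outer blocks decay at least like $n^{-n/2}$ and are negligible. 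Hence $|\psi_{n,c}(x)|\lesssim n^{-2}$ uniformly on any compact $K\subset\R$ for $n\geq n_0$.

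Next, for $f\in L^p(\R)$, Hölder's inequality together with the norm estimate $\|\psi_{n,c}\|_{p'}\lesssim n^{\gamma_{p'}}$, with $\gamma_{p'}<1/2$, from \cite[Lemma~2.6]{bojaso19} gives
$$
|\langle f,\psi_{n,c}\rangle|\leq \|f\|_p\,\|\psi_{n,c}\|_{p'}\lesssim \|f\|_p\, n^{\gamma_{p'}}.
$$
Combining this with the pointwise bound yields
$$
|\langle f,\psi_{n,c}\rangle\,\psi_{n,c}(x)|\lesssim \|f\|_p\, n^{\gamma_{p'}-2}
$$
uniformly for $x\in K$ and $n\geq n_0$, and since $\gamma_{p'}-2<-3/2$ this is summable in $n$. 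The finitely many terms $n<n_0$ are uniformly bounded on $K$ and do not affect summability. Therefore $\Psi_Nf$ converges absolutely at every point and uniformly on compact subsets of $\R$ to a function $h$.

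The main obstacle I anticipate is the careful bookkeeping in bounding $\sum_k n^{-|k-n|}k^{-2}$ by $n^{-2}$, and verifying that the implicit constant in the pointwise estimate \eqref{eq:pointwise-bessel} can genuinely be chosen uniformly on compact sets, so that the convergence upgrades from pointwise to locally uniform; once these two points are secured, the rest is a routine combination of Hölder's inequality with the standard norm estimates recalled in Section~\ref{sec:Bessel}.
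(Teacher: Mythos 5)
Your proof is correct and follows essentially the same route as the paper: derive the uniform-on-compacta bound $|\psi_{n,c}(x)|\lesssim n^{-2}$ from the coefficient estimates (i)--(ii) and the pointwise Bessel decay \eqref{eq:pointwise-bessel}, then conclude by H\"older's inequality and the $L^{p'}$ norm bounds. The only difference is cosmetic --- you split the sum over $k$ into the blocks $k\leq n/2$, $n/2<k<2n$, $k\geq 2n$, whereas the paper isolates the terms $k=0,n-1,n,n+1$ and two tail sums --- and both bookkeeping schemes yield the same $n^{-2}$ bound.
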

\begin{proof}
Let us first rewrite $\va{n}$ as  
\begin{align*}
\va{n}&=\sum_{k\geq 0}b_k^n \ja{k}\\&=b_0^n\ja{0}+b_n^n\ja{n}+b_{n-1}^n\ja{n-1}+b_{n+1}^n\ja{n+1}+\sum_{k=1}^{n-2}b_k^n\ja{k}+\sum_{k\geq n+2}b_k^n\ja{k}.
\end{align*}
Subsequently,   we  estimate each of the summands separately.  First, by \eqref{enum:1} we have $|b_0^n\ja{0}(x)|\lesssim  n^{-2}$, and  $|b_n^n\ja{n}(x)|\leq|\ja{n}(x)|\lesssim n^{-2}$ by \eqref{eq:pointwise-bessel}. If $n\geq 2$, the last two terms may be bounded using \eqref{enum:2} and \eqref{eq:pointwise-bessel} as
$$
\sum_{k\geq n+2}|b_k^n\ja{k}(x)|\lesssim \sum_{k\geq n+2} n^{-|k-n|}k^{-2}\leq n^{-2}\sum_{k\geq 0} n^{-(k+2)}\lesssim n^{-2},
$$
and
$$
\sum_{k=1}^{n-2}|b_k^n\ja{k}(x)|\lesssim  \sum_{k=1}^{n-2} n^{-|k-n|}k^{-2}\leq \sum_{k=2}^{n-1} n^{-k}=n^{-2}\sum_{k=0}^{n-3} n^{-k}\lesssim n^{-2}.
$$
The third and fourth term may also easily be bounded using \eqref{enum:2}: 
$$
|b_{n-1}^n\ja{n-1}(x)|\lesssim n^{-1}(n-1)^{-2}\lesssim n^{-2},
$$
%\quad\mbox{ and }\quad
and
$$
|b_{n+1}^n\ja{n+1}(x)|\lesssim n^{-1}(n+1)^{-2}\lesssim n^{-2}.
$$
Therefore, $|\va{n}(x)|\lesssim n^{-2}$ which implies by  H{\"o}lder's inequality that
\begin{align*}
 \sum_{n=0}^N|\langle f,\va{n}\rangle \va{n}(x)|&\leq \sum_{n=0}^N\| f\|_{L^p(\R)}\|\va{n}\|_{L^q(\R)} |\psi_n(x)|\\ &\lesssim \| f\|_{L^p(\R)}\sum_{n=0}^N n^{\gamma_q-2}<\infty.
\end{align*}
\hfill\qed
\end{proof}

\noindent The following lemma is  well-known to the majority of our readers. 
We will nevertheless include a proof here. % since the arguments transfer to the case of the Hankel transform, 
%where the result is not as commonly known.

\begin{lemma}\label{lem:pw-dense}
If $1\leq p\leq q<\infty$, then $PW_c^p\hookrightarrow PW_c^q$ where the inclusion map is  continuous and dense. Moreover, the projection $P_c$ maps $L^p(\R)$ boundedly into  $PW_c^p$ if $1<p<\infty$.
\end{lemma}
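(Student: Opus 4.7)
\noindent The plan has three ingredients, one for each assertion of the lemma.

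\textbf{Continuity of the inclusion.} I would first establish the Nikolskii-type inequality $\|f\|_q\lesssim \|f\|_p$ for $f\in PW_c^p$. Fix $\phi\in\mathcal{S}(\R)$ with $\widehat{\phi}\in C^\infty_c(\R)$ and $\widehat{\phi}\equiv 1$ on $[-c,c]$. For $f\in PW_c^p$, the distributional identity $\widehat{f\ast\phi}=\widehat{f}\widehat{\phi}=\widehat{f}$ yields $f=f\ast\phi$ almost everywhere, and H\"older then gives $\|f\|_\infty\leq \|\phi\|_{p'}\|f\|_p$. Combining this with the elementary interpolation $\|f\|_q^q\leq \|f\|_\infty^{q-p}\|f\|_p^p$ produces $\|f\|_q\lesssim\|f\|_p$, which is the continuity of the embedding.

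\textbf{Boundedness of $P_c$ on $L^p(\R)$.} I would decompose the Fourier multiplier as $\chi_{[-c,c]}(\xi)=\chi_{[0,\infty)}(\xi+c)-\chi_{[0,\infty)}(\xi-c)$ and then use $\chi_{[0,\infty)}=\tfrac12(1+\mathrm{sgn})$. Since multiplication on the Fourier side by $\mathrm{sgn}$ corresponds, up to the factor $i$, to the Hilbert transform on the time side, $P_c$ becomes a finite linear combination of modulations and compositions involving $H$. Modulations are $L^p$-isometries, and $H$ is bounded on $L^p(\R)$ for $1<p<\infty$ by Section~\ref{sec:hilbert-trafo} applied to the trivial weight $\omega\equiv 1\in A^p(\R)$. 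The inclusion $P_cf\in PW_c^p$ is built into the Fourier-multiplier definition.

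\textbf{Density.} Given $g\in PW_c^q$ with $1\leq q<\infty$, I would approximate $g$ by functions that lie simultaneously in every $PW_c^p$, $p\geq 1$. Set $g_\lambda(x):=g(\lambda x)$ for $\lambda\in(0,1)$; then $\widehat{g_\lambda}$ is a tempered distribution supported in $[-c\lambda,c\lambda]$, and the continuity of dilations on $L^q(\R)$ (which requires $q<\infty$) gives $g_\lambda\to g$ in $L^q$. Next, let $\rho_\delta\in C_c^\infty(\R)$ be a standard mollifier with $\rho_\delta\to\delta_0$ and define $\eta_\delta:=\mathcal{F}^{-1}(\rho_\delta)\in\mathcal{S}(\R)$, so that $\|\eta_\delta\|_\infty\leq 1$ and $\eta_\delta\to 1$ pointwise. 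The product $h_{\lambda,\delta}:=g_\lambda\eta_\delta$ has Fourier transform $\widehat{g_\lambda}\ast\rho_\delta\in C_c^\infty((-c,c))$ as soon as $\delta<c(1-\lambda)$, so by Paley--Wiener $h_{\lambda,\delta}\in\mathcal{S}(\R)\cap PW_c^p$ for every $p\in[1,\infty]$. Dominated convergence yields $h_{\lambda,\delta}\to g_\lambda$ in $L^q$ as $\delta\to 0$, and a diagonal extraction produces a sequence in $PW_c^p$ converging to $g$ in $L^q$.

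The main obstacle is the $p=1$ endpoint of density: the natural shortcut of truncating $g$ in space and then projecting via $P_c$ fails because $P_c$ is \emph{not} bounded on $L^1(\R)$. The dilation-plus-Fourier-mollification construction above is designed precisely to sidestep this, since it produces Schwartz approximants that land in every $PW_c^p$ at once and thus treat all admissible $p$ uniformly.
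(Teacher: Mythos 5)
Your proposal is correct, and while the continuity and boundedness parts run parallel to the paper's, your density argument takes a genuinely different route. For the embedding, the paper applies Young's inequality directly, writing $f=f\ast\vartheta_2$ with $\widehat{\vartheta}_2\equiv 1$ on $[-c,c]$ and using $L^p\ast L^{pq/(pq+p-q)}\to L^q$, whereas you pass through the Nikolskii bound $\|f\|_\infty\le\|\phi\|_{p'}\|f\|_p$ and log-convexity of the $L^q$ norms; these are interchangeable. For $P_c$, the paper reaches the same conclusion as you by expanding the kernel $\sin(2\pi c(x-t))/\pi(x-t)$ with the sine addition formula, which is exactly the time-side incarnation of your multiplier identity $\chi_{[-c,c]}=\tfrac12\bigl(\mathrm{sgn}(\xi+c)-\mathrm{sgn}(\xi-c)\bigr)$; both reduce to modulations composed with the Hilbert transform on unweighted $L^p$. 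The real divergence is in the density proof: the paper approximates $f\in PW_c^q$ by some $f_\varepsilon\in L^p\cap L^q$ and then convolves with a Schwartz function $\vartheta_1$ whose Fourier transform is supported in $[-c,c]$ and which is $L^1$-close to $\vartheta_2$; you instead dilate to compress the spectrum strictly inside $(-c,c)$ and then multiply by $\mathcal{F}^{-1}(\rho_\delta)$, landing in $\mathcal{S}(\R)\cap PW_c^p$ for every $p$ at once. Your version is arguably the more robust of the two: since $\widehat{\vartheta}_1$ is continuous and supported in $[-c,c]$ it must vanish at $\pm c$, while $\widehat{\vartheta}_2=1$ there, so $\|\vartheta_1-\vartheta_2\|_{L^1}\ge\|\widehat{\vartheta}_1-\widehat{\vartheta}_2\|_\infty\ge 1$ and the paper's pair $(\vartheta_1,\vartheta_2)$ cannot exist exactly as stated; the standard repair is a preliminary spectral shrinking, which is precisely the dilation step you build in. The only point to make explicit in your write-up is the standard distributional justification that $\widehat{\phi}\equiv 1$ on a \emph{neighbourhood} of $[-c,c]$ is what guarantees $\widehat{f}\,\widehat{\phi}=\widehat{f}$ for $p>2$, where $\widehat{f}$ is only a compactly supported tempered distribution.
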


\begin{proof}
Let $\vartheta_1,\vartheta_2$ be two functions from the Schwartz space such that $\text{supp}(\widehat{\vartheta}_1)\subseteq [-c,c]$, $\widehat{\vartheta}_2(\xi)=1,$ for every $\xi\in[-c,c]$, and $\|\vartheta_1-\vartheta_2\|_{L^1(\R)}\leq\varepsilon$. If $f\in PW_c^p$, then $f\ast\vartheta_2=f$ and, by Young's convolution inequality, one has $$\|f\|_{L^q(\R)}=\|f\ast\vartheta_2\|_{L^q(\R)}\leq \|f\|_{L^p(\R)}\|\vartheta_2\|_{L^\frac{pq}{pq+p-q}(\R)}.$$ This  shows that $PW_c^p$ is continuously embedded into $PW_c^q$. 

Let us now show the density. 
For every $\varepsilon>0$, and $f\in PW_c^q$ we may choose  $f_\varepsilon\in L^p(\R)\cap L^q(\R)$,  such that $\|f- f_\varepsilon\|_{L^q(\R)}=\varepsilon$. Again by Young's convolution inequality   we thus obtain
\begin{align*}
\|  f_\varepsilon\ast\vartheta_1-f\|_{L^q(\R)}&\leq \| f_\varepsilon\ast\vartheta_1-f\ast\vartheta_1\|_{L^q(\R)}+\|f\ast\vartheta_1-f\ast \vartheta_2\|_{L^q(\R)}\\
&\leq \|f_\varepsilon-f\|_{L^q(\R)}\|\vartheta_1\|_{L^1(\R)}+\|f\|_{L^q(\R)}\|\vartheta_1-\vartheta_2\|_{L^1(\R)}\lesssim\varepsilon.
\end{align*}
Finally, $f_\varepsilon\ast \vartheta_1\in PW_c^p$ as $\text{supp}\big(\mathcal{F}(f_\varepsilon\ast \vartheta_1)\big)\subseteq \text{supp}(\widehat \vartheta_1)$, and $\|f_\varepsilon\ast \vartheta_1\|_{L^p(\R)}\leq\|f_\varepsilon\|_{L^p(\R)}\|\vartheta_1\|_{L^1(\R)}$. 

Concerning the continuity of $P_c$, we observe that for almost every point $x\in\R$, $P_c f(x)$ may be written as the sum of Hilbert transforms
\begin{align*}
P_c f(x)&=f\ast 2c\sinc 
(2c\hspace{0.08cm} \cdot\hspace{0.06cm})(x)=\int_\R f(t)\frac{\sin(2\pi c(x-t))}{\pi(x-t)}dt\\
&=\sin(2\pi cx)\pv\int_\R f(t)\frac{\cos(2\pi ct)}{\pi(x-t)}dt\\
&\quad-\cos(2\pi c x)\pv\int_\R f(t)\frac{\sin(2\pi ct)}{\pi(x-t)}dt\\
& =\sin(2\pi cx)H\big(f\cdot\cos(2\pi c\hspace{0.08cm}\cdot\hspace{0.06cm})\big)(x)-\cos(2\pi c x)H\big(f\cdot\sin(2\pi c\hspace{0.08cm}\cdot\hspace{0.06cm})\big)(x).
\end{align*}
%where $H$ denotes the Hilbert transform.
By the continuity of the Hilbert transform on $L^p(\R)$ in the range $1<p<\infty$, it thus follows that
\begin{align*}
\|P_c f\|_{L^p(\R)}\hspace{-1pt}\leq\hspace{-1pt} \|H\big(f\cdot\cos(2\pi c\hspace{0.06cm}\cdot\hspace{0.06cm})\big)\|_{L^p(\R)}\hspace{-1pt}+\hspace{-1pt}\|H\big(f\cdot\sin(2\pi c\hspace{0.06cm}\cdot\hspace{0.06cm})\big)\|_{L^p(\R)}\hspace{-1pt}
\lesssim\hspace{-1pt}\|f\|_{L^p(\R)}.
\end{align*}\hfill\qed
\end{proof}

\begin{proposition}\label{prop:pointwise-ae}
If  $\frac{4}{3}<p<4$, then $\Psi_N f\rightarrow P_cf$ almost everywhere for every $f\in L^p(\R)$. 
\end{proposition}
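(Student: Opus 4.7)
The plan is to combine three ingredients already available: the pointwise absolute convergence of $\Psi_N f$ on $L^p(\R)$ for $1<p<\infty$ from Lemma~\ref{lem:pointwise}, the Barcel\'o-Cordoba $L^p$-norm convergence $\Psi_N g\to g$ on $PW_c^p$ in the range $\frac{4}{3}<p<4$, and the continuity of $P_c:L^p(\R)\to PW_c^p$ from Lemma~\ref{lem:pw-dense}. By Lemma~\ref{lem:pointwise}, for $f\in L^p(\R)$ the series $\Psi_N f(x)$ converges absolutely at every point to some function $h(x)$, so the proof reduces to identifying $h=P_c f$ almost everywhere.

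The key intermediate step I would establish is the invariance identity
$$
\Psi_N f = \Psi_N(P_c f)\qquad (N\geq 0,\ f\in L^p(\R)),
$$
which reduces to showing $\langle f,\psi_{n,c}\rangle=\langle P_c f,\psi_{n,c}\rangle$ for every $n\geq 0$. This in turn rests on two facts: (i) $\psi_{n,c}\in PW_c^2\cap L^q(\R)$ with $q$ the conjugate exponent of $p$, so that $P_c\psi_{n,c}=\psi_{n,c}$ and the pairing is absolutely convergent, with $L^q$-membership coming from the bound $\|\psi_{n,c}\|_{L^q(\R)}\lesssim n^{\gamma_q}$ recalled at the beginning of this section; and (ii) the self-adjointness of $P_c$ for the $L^p$-$L^q$ duality, namely $\int(P_c u)\overline v\,dx=\int u\,\overline{P_c v}\,dx$. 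The latter is immediate on Schwartz functions by Parseval, since the Fourier multiplier $\chi_{[-c,c]}$ is real-valued and even, and extends to $u\in L^p(\R)$, $v\in L^q(\R)$ by density together with the $L^p$- and $L^q$-boundedness of $P_c$ from Lemma~\ref{lem:pw-dense}.

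Once this identity is in hand, the conclusion follows quickly. Since $P_c f\in PW_c^p$ by Lemma~\ref{lem:pw-dense} and $\frac{4}{3}<p<4$, the Barcel\'o-Cordoba theorem yields $\Psi_N(P_c f)\to P_c f$ in $L^p(\R)$. Extracting a subsequence $(N_k)$ along which the convergence is almost everywhere, and combining with the pointwise convergence $\Psi_{N_k}(P_c f)=\Psi_{N_k}f\to h$ supplied by Lemma~\ref{lem:pointwise}, we obtain $h=P_c f$ almost everywhere, as desired.

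The only delicate point is the self-adjointness identity in the range $p>2$, where the Fourier transform of $f$ is a priori only a tempered distribution; the density argument sketched above is designed precisely to bypass this complication by relying solely on the operator bounds for $P_c$ provided by Lemma~\ref{lem:pw-dense}, rather than manipulating $\widehat f$ directly.
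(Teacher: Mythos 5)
Your proposal is correct and follows essentially the same route as the paper: both reduce to the identity $\Psi_N f=\Psi_N(P_c f)$ via the self-adjointness of $P_c$ and the fact that $P_c$ fixes band-limited functions, then combine the Barcel\'o--C\'ordoba $L^p$-convergence (which yields an a.e.\ convergent subsequence) with the everywhere-convergence from Lemma~\ref{lem:pointwise} to identify the pointwise limit with $P_c f$. The only difference is that you spell out the duality argument for $\langle f,\psi_{n,c}\rangle=\langle P_c f,\psi_{n,c}\rangle$ in more detail than the paper, which simply cites the continuity of $P_c$.
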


\begin{proof}
First, as $P_c$ is continuous for $\frac{4}{3}<p<4$, it follows that $\langle f,\ja{n}\rangle=\langle f,P_c\ja{n}\rangle=\langle P_cf,\ja{n}\rangle$ and therefore $\Psi_N f=\Psi_N P_cf$.
Since $\Psi_N P_cf\rightarrow P_cf$ in $L^p(\R)$ by \cite[Theorem 1]{baco89}, there exists a subsequence $(\Psi_{N_k}P_cf)_{k\geq0}$ that converges almost everywhere to $P_cf$. From Lemma~\ref{lem:pointwise} we know that $\Psi_{N}f$ converges pointwise for every $x\in \R$ to a function $h$. Consequently, $P_c f=h$ almost everywhere. \hfill \qed
\end{proof}

\noindent We have now gathered all ingredients to prove our main theorem.

\begin{proofth}%[Proof of Theorem~\ref{thm:extension}]
If $1<p\leq 2$, then $P_c f\in PW_c^2$ for every $f\in L^p(\R)$  by Lemma~\ref{lem:pw-dense}. In this case, the result follows from Proposition~\ref{prop:pointwise-ae} and the identity $\Psi_N f=\Psi_N P_cf$. Similarly, the statement for $p=1$ holds as $PW_c^1\subset PW_c^2$. If $2<p<\infty$, and $f\in L^p(\R)$, then there exists a sequence $(f_k)_{k\geq0}\subset PW_c^2$  by Lemma~\ref{lem:pw-dense}  such that $f_k\rightarrow  P_c f\in PW_c^p$ and $\|f_k\|_{L^p(\R)}\lesssim\| f\|_{L^p(\R)}$. Without loss of generality, we may assume that $f_k\rightarrow P_cf$ almost everywhere and define 
$X_f:=\{x\in\R:\ \lim_{k\rightarrow\infty}f_k(x)= P_c f(x)\}$, as well as 
$$
X_k:=\Big\{x\in\R:\ \lim_{N\rightarrow\infty}\Psi_Nf_k(x)=f_k(x)\Big\}.
$$
 From Proposition~\ref{prop:pointwise-ae} we know that each $\R\backslash X_k$ has Lebesgue measure zero and, consequently, so has $\R\backslash\big(\bigcap_{k\geq 0}X_k\cap X_f\big)=\bigcup_{k\geq 0}(\R\backslash X_k)\cup (\R\backslash X_f)$. 

Using \eqref{eq:pointwise-bessel} and the estimate on the $L^p$-norms of $\ja{n}$ we derive
$$
\sum_{n\geq 0}|\langle f_k,\psi_n\rangle \psi_n(x)|\lesssim \|f\|_{L^p(\R)}\sum_{n\geq 0}n^{\gamma_q-2}<\infty.
$$
Now, if
$x\in \bigcap_{k\geq 0}X_k\cap X_f$, we may conclude
by the dominated convergence theorem that 
\begin{align*}
P_cf(x)&=\lim_{k\rightarrow\infty}f_k(x)=\lim_{k\rightarrow\infty}\lim_{N\rightarrow\infty}\Psi_Nf_k(x)=\lim_{N\rightarrow\infty}\lim_{k\rightarrow\infty}\Psi_Nf_k(x)
\\
&=\lim_{N\rightarrow\infty}\Psi_N P_cf(x)=\lim_{N\rightarrow\infty}\Psi_N f(x).
\end{align*}
\hfill\qed
\end{proofth}

%\subsection{Weighted PSWF} Jacobi polynomial $P_k^{(\alpha)}=P_k^{(\alpha,0)}$
%$$
%\varphi_k(x)=\widetilde{P}^{(\alpha)}_k(x)=\Big\|P_k^{(\alpha)}\Big\|^{-1}_{L^2(I,\omega_\alpha)}P_k^{(\alpha)}
%$$
%It has been shown that 
%\begin{equation}
%|P_k^{(\alpha)}(x)|\lesssim (\sqrt{1-x}+n^{-1})^{-\alpha-1/2}(\sqrt{1+x}+n^{-1})^{-\alpha-1/2}
%\end{equation}
%uniformly over $(-1,1)$
%see e.g. \cite[Chapter 4]{sz75}

\section{Adaptations for Circular PSWF }\label{sec:circular}

\noindent By  a combination of several lemmas in \cite{bojaso19} (in particular Lemmas~2.5, 5.3 \& 5.4) one can deduce that the change of base  coefficients $ b_m^n$ in 
$$
\varphi_{n,c}^\alpha=\sum_{m\geq 0}b_m^n\ \jc{m}
$$
satisfy the conditions \eqref{enum:1} and \eqref{enum:2}. Hence,
  $\|\varphi_{n,c}^\alpha\|_{L^p(0,\infty)}\lesssim n^{\gamma_p}$, for some $0\leq \gamma_p<1$.
The proof of Lemma~\ref{lem:pointwise} can therefore be transferred one-to-one   to show that 
$\Phi_Nf$ converges pointwise to a function $h$ for every $f\in L^p(0,\infty)$, $1< p<\infty$. 

The analogue of Lemma~\ref{lem:pw-dense} needs some preparation. In particular, we need to show the continuity of the projection operator $P_c^\al$. In \cite{va94} the author shows that for the following convention of the Hankel transform  
$$
\mathcal{F}^\al f(x):=\frac{x^{-\al/2}}{2}\int_0^\infty f(t)J_\al \big(\sqrt{xt}\big)t^{\al/2}dt,
$$
the projection  $\mathcal{F}^\al(\chi_{[0,c]}\cdot\mathcal{F}^\al)(x)$ acts as a continuous operator on the weighted Lebesgue spaces $L^p((0,\infty),x^\al)$ whenever $4\frac{\al+1}{2\al+3}<p<4\frac{\al+1}{2\al+1}$. 
Although different definitions of the Hankel transform can be related to each other and their $L^2$-theory coincides, this is no longer true for general $L^p$-spaces,  $p\neq 2$. 

For instance, when considering the $L^p\big((0,\infty),x^\al\big)$-convergence of spherical Bessel expansions, Varona \cite{va94}  showed that these series converge in $L^p$ if and only if $\max\Big\{\frac{4}{3},4\frac{\al+1}{2\al+3}\Big\}<p<\min\Big\{4,4\frac{\al+1}{2\al+1}\Big\}$. The restriction with respect to $\alpha$ however disappears in  \cite{bojaso19} due to a different convention for the Hankel transform and $L^p(0,\infty)$-convergence holds if and only if $\frac{4}{3}<p<4$.

As a second example, we show in the following that for the convention chosen in this paper, the projection $P_c^\al$ is continuous for $1<p<\infty$.
We will follow the arguments of the proof of \cite[Theorem 2]{va94}. %Note that the result there only holds for $m(\al)<p<M(\al)$. 
\begin{theorem}\label{thm:projection-cont}
If $\al>-\frac{1}{2}$, and $1<p\leq q<\infty$, then 
$$
\|P^\al_c f\|_{L^q(0,\infty)}\lesssim \|f\|_{L^p(0,\infty)},\qquad f\in L^p(0,\infty).
$$
\end{theorem}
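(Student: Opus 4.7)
The plan is to follow Varona's method from \cite{va94}, adapted to the Hankel transform convention used in this paper, and then to upgrade the $L^p\to L^p$ bound to the $L^p\to L^q$ bound via a Bernstein-type embedding of Paley--Wiener spaces.

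For the $L^p\to L^p$ estimate, the first step is to swap the order of integration in the definition of $P_c^\al$, yielding $P_c^\al f(x)=\int_0^\infty K(x,y)f(y)\d y$ with kernel
$$
K(x,y)=\sqrt{xy}\int_0^c\xi J_\al(x\xi)J_\al(y\xi)\d\xi=\frac{c\sqrt{xy}}{x^2-y^2}\bigl[yJ_\al(xc)J_\al'(yc)-xJ_\al'(xc)J_\al(yc)\bigr]
$$
by Lommel's integral formula. Using the derivative recurrence $J_\al'(z)=J_{\al-1}(z)-(\al/z)J_\al(z)$ together with the large-$z$ expansion $J_\al(z)=\sqrt{2/(\pi z)}\cos(z-\al\pi/2-\pi/4)+O(z^{-3/2})$, I would decompose $K(x,y)$ into a principal singular part of the form $\frac{h_1(x)h_2(y)}{x-y}$ (with $h_1,h_2$ bounded oscillating factors times a fixed power of the argument), a harmless part controlled by $\frac{h_3(x)h_4(y)}{x+y}$ (which behaves like an integral operator with an integrable convolution kernel once the weight is absorbed), and lower-order remainders that are straightforward to estimate.

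The main obstacle is the principal part, which, after factoring out the bounded multipliers, reduces to the Hilbert transform against a power weight of the form $x^\beta$. The special feature of the convention chosen in this paper is that the factor $\sqrt{xy}$ in $\H^\al$ absorbs the $x^\al$ singularity at the origin that appears in Varona's convention, leaving a weight $x^\beta$ that lies strictly in $(-1,p-1)$ for every $1<p<\infty$. Hence $x^\beta\in A^p(0,\infty)$, and the Hunt--Muckenhoupt--Wheeden theorem (quantified by Petermichl) recalled in Section~\ref{sec:hilbert-trafo} yields the $L^p\to L^p$ bound without any restriction on $\al$ or $p$. This is in sharp contrast with the Varona-convention case, where the $\al$-dependent restriction on $p$ comes precisely from the weight failing to be in $A^p$.

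To upgrade to $L^p\to L^q$ for $p\le q$, I would establish the continuous embedding $B_{\al,c}^p\hookrightarrow B_{\al,c}^q$ in exact analogy with Lemma~\ref{lem:pw-dense}: pick $\vartheta\in\mathcal S$ whose Hankel transform equals $1$ on $[0,c]$; then for $g\in B_{\al,c}^p$ one has $g=g\diamond\vartheta$, where $\diamond$ denotes the Hankel convolution associated to $\H^\al$, and Young's inequality for Hankel convolution gives $\|g\|_{L^q(0,\infty)}\lesssim\|g\|_{L^p(0,\infty)}\|\vartheta\|_{L^r(0,\infty)}$ for the appropriate $r$. Since $P_c^\al f\in B_{\al,c}^p$ by the $L^p\to L^p$ step, composing with this embedding produces the claimed $L^p\to L^q$ estimate.
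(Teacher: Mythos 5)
Your $L^p\to L^p$ argument is a sound variant of the paper's. Both start from Lommel's integral, but where you expand $J_\al$ asymptotically and split $\frac{1}{x^2-y^2}$ into $\frac{1}{x-y}$ and $\frac{1}{x+y}$ pieces (Varona's original route, which forces a separate, tedious treatment of the regions where $cx$ or $cy$ is small, glossed over in your ``lower-order remainders''), the paper instead substitutes $x\mapsto\sqrt{x}$, $y\mapsto\sqrt{y}$, which turns the Lommel kernel \emph{exactly} into two Hilbert transforms with bounded Bessel multipliers and power weights $x^{-1/2}$, $x^{p/2-1/2}\in A^p(0,\infty)$ --- no asymptotic expansion, no $\frac{1}{x+y}$ term, no near-origin case analysis. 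Your observation that the $\sqrt{xy}$ normalization is what removes the $\al$-dependent restriction on $p$ is correct and is indeed the point of the theorem.

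The genuine gap is in the upgrade to $L^p\to L^q$. You invoke a Young inequality for ``the Hankel convolution associated to $\H^\al$'' on the unweighted spaces $L^p(0,\infty)$, and no such inequality is available off the shelf: the Hankel convolution of Hirschman, Haimo and Cholewinski, together with its Young inequality, lives on the weighted spaces $L^p\big((0,\infty),x^{2\al+1}\d x\big)$ attached to the normalization $J_\al(xy)/(xy)^\al$ of the transform; conjugating by $x^{\al+1/2}$ to reach the convention of this paper matches the norms only at $p=2$. This is exactly the convention-dependence the paper warns about in Section~\ref{sec:circular} --- $L^p$-statements for one normalization of the Hankel transform do not transfer to another when $p\neq 2$ --- and the uniform boundedness of the Hankel translation operator on unweighted $L^p(0,\infty)$, which any such Young inequality would require, is itself a nontrivial, $\al$- and $p$-dependent matter. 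You would in addition need the identity $g=g\diamond\vartheta$ for $g\in B_{\al,c}^p$ with $p>2$, where $\H^\al g$ is only defined distributionally. Note also that the embedding $B_{\al,c}^p\hookrightarrow B_{\al,c}^q$ appears in the paper as a \emph{corollary} of this very theorem, so using it as an input demands an independent proof, which is precisely what is missing. The paper sidesteps all of this with elementary tools: H\"older on $[0,c]$ gives $\|P_c^\al f\|_{L^\infty(0,\infty)}\lesssim\|\H^\al f\|_{L^p(0,\infty)}$, whence the endpoint bounds $L^1\to L^\infty$ and (via Plancherel) $L^2\to L^\infty$; Riesz--Thorin interpolation against the $L^p\to L^p$ bound then gives $L^p\to L^q$ for $1<p\le 2$, $p\le q<\infty$, and self-adjointness of $P_c^\al$ plus duality covers the remaining range. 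Replacing your last paragraph by this interpolation--duality argument would make the proof complete.
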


\begin{proof} Let us first write down $P^\al_c$ explicitly
\begin{align*}
P^\al_c f(x)&=\H^\al \big(\chi_{[0,c]}\cdot\H^\al f\big)(x)=\int_0^c\H^\al f(t)\sqrt{tx}J_\al(tx)\d t
\\
&= \int_0^c  \int_0^\infty f(y)\sqrt{ty}J_\al(ty)dy\sqrt{tx}J_\al(tx)\d t
\\
&=   \int_0^\infty f(y)\sqrt{xy}\int_0^cJ_\al(tx)J_\al(ty)t\d t\d y
\\
&=   \int_0^\infty f(y)\sqrt{xy} c\frac{yJ_{\al-1}(cy)J_\al (cx)-xJ_{\al-1}(cx)J_\al(cy)}{x^2-y^2}\d y,
\end{align*}
where we have used the explicit expression for Lommel's integrals \cite[p. 101]{bow58}  to derive the last equality.
By a change of variables one then obtains
\begin{align*}
&P^\al_c f(\sqrt{x})\\
&=   \int_0^\infty f(\sqrt{y})(xy)^{1/4} c\frac{\sqrt{y}J_{\al-1}(c\sqrt{y})J_\al (c\sqrt{x})-\sqrt{x}J_{\al-1}(c\sqrt{x})J_\al(c\sqrt{y})}{x-y}%\frac{y^{-1/2}}{2}
\frac{\d y}{2y^{1/2}}\\
&=W_1f(x)-W_2f(x),
\end{align*}
where 
$$
W_1f(x):= \frac{\pi cx^{1/4}J_\al(c\sqrt{x})}{2}H\Big(f(\sqrt{y})y^{1/4}J_{\al-1}(c\sqrt{y})\Big)(x),
$$
and
$$
W_2f(x):=\frac{\pi cx^{3/4}J_{\al-1}(c\sqrt{x})}{2}H\Big(f(\sqrt{y})y^{-1/4}J_{\al}(c\sqrt{y})\Big)(x).
$$
Note that both $x^{-1/2}$, and $x^{p/2-1/2}$ are Muckenhoupt $A^p(0,\infty)$ weights by the remark in Section~\ref{sec:hilbert-trafo}. Hence, the Hilbert transform is continuous on the respective weighted Lebesgue spaces for every $1<p<\infty$. As $x^{1/4}J_\al(c\sqrt{x})\lesssim 1$, we thus have
\begin{align*}
\int_0^\infty |W_1f(x)|^px^{-1/2}\d x&\lesssim\int_0^\infty \left|H\Big(f(\sqrt{y})y^{1/4}J_{\al-1}(c\sqrt{y})\Big)(x)\right|^px^{-1/2}\d x
\\ 
&\lesssim \int_0^\infty \left|f(\sqrt{y})y^{1/4}J_{\al-1}(c\sqrt{y})\right|^py^{-1/2}\d y
\\
&\lesssim \int_0^\infty \left|f(\sqrt{y}) \right|^py^{-1/2}\d y= 2\int_0^\infty \left|f( y) \right|^p\d y,
\end{align*}
as well as 
\begin{align*}
\int_0^\infty |W_2f(x)|^px^{-1/2}\d x&\lesssim \int_0^\infty \left|H\Big(f(\sqrt{y})y^{-1/4}J_{\al}(c\sqrt{y})\Big)(x)\right|^px^{p/2-1/2}\d x
\\ 
&\lesssim\int_0^\infty \left|f(\sqrt{y})y^{-1/4}J_{\al}(c\sqrt{y})\right|^py^{p/2-1/2}\d y
\\
&\lesssim \int_0^\infty \left|f(\sqrt{y}) \right|^py^{-1/2}\d y= 2\int_0^\infty \left|f( y) \right|^p\d y.
\end{align*}
This  proves the case $p=q$ using $\|P^\al_c f\|_{L^p(0,\infty)}\leq \|W_1 f(x^2)\|_{L^p(0,\infty)}+\|W_2f(x^2)\|_{L^p(0,\infty)}$ and a change of variables.

Now, notice that for every $1\leq p\leq \infty$ one has
$$
\|P^\al_c f\|_{L^\infty(0,\infty)}\lesssim\|\chi_{[0,c]}\H^\al f\|_{L^1(0,\infty)}\lesssim  \| \H^\al f\|_{L^p(0,\infty)}.
$$
Setting $p=2$ gives $\|P^\al_c f\|_{L^\infty(0,\infty)}\lesssim \|  f\|_{L^2(0,\infty)}$ by Plancherel's theorem,  and $p=\infty$ yields $\|P^\al_c f\|_{L^\infty(0,\infty)}\lesssim \|  f\|_{L^1(0,\infty)}$. It thus follows by the Riesz-Thorin theorem  that $\|P^\al_c f\|_{L^\infty(0,\infty)}\lesssim \|  f\|_{L^p(0,\infty)}$, for every %$f\in L^p(0,\infty)$, and
 $1\leq p\leq2$. %, as $L^1(0,\infty)\cap L^2(0,\infty)$ is dense in all those spaces. 
Moreover, as $\|P^\al_c f\|_{L^p(0,\infty)}\lesssim \|  f\|_{L^p(0,\infty)}$, $1<p<\infty$, 
%by Theorem~\ref{thm:projection-cont}
 it follows again by the Riesz-Thorin theorem that 
\begin{equation}\label{eq:RT1}
\|P^\al_c f\|_{L^q(0,\infty)}\lesssim \|  f\|_{L^p(0,\infty)},\quad 1<p\leq 2,\quad p\leq q<\infty.
\end{equation} 
Since the adjoint of $P_c^\al:L^p(0,\infty)\rightarrow L^q(0,\infty)$ is given by $P_c^\al:L^{q'}(0,\infty)\rightarrow L^{p'}(0,\infty)$, we conclude that also 
\begin{equation}\label{eq:RT2}\|P^\al_c f\|_{L^{p'}(0,\infty)}\lesssim \|  f\|_{L^{q'}(0,\infty)},\quad  1<q'\leq p',\quad 2\leq p'<\infty.
\end{equation}
Combining \eqref{eq:RT1} and \eqref{eq:RT2} thus yields the continuity of $P_c^\al: L^p(0,\infty)\rightarrow L^q(0,\infty)$  for the whole scale $1<p\leq q <\infty$.
\hfill\qed
\end{proof}

\begin{corollary}
If $1<p\leq q <\infty$, then  $B_{\alpha,c}^p\hookrightarrow  B_{\al,c}^q$ with the inclusion map being continuous and dense. Moreover, if $1\leq p\leq \infty$, then $B_{\alpha,c}^1\subset  B_{\al,c}^p$.
\end{corollary}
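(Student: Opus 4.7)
The plan is to mirror the proof of Lemma~\ref{lem:pw-dense}, replacing the use of Young's convolution inequality by the $L^p\to L^q$ boundedness of $P_c^\alpha$ from Theorem~\ref{thm:projection-cont}, and exploiting the identity $f=P_c^\alpha f$ characterising membership in $B_{\alpha,c}^p$.

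For the continuous embedding $B_{\alpha,c}^p\hookrightarrow B_{\alpha,c}^q$, $1<p\leq q<\infty$, take $f\in B_{\alpha,c}^p$. By Theorem~\ref{thm:projection-cont},
\[
\|f\|_{L^q(0,\infty)}=\|P_c^\alpha f\|_{L^q(0,\infty)}\lesssim\|f\|_{L^p(0,\infty)},
\]
and since $\mathcal H^\alpha f$ as a tempered distribution depends only on $f$, not on the chosen $L^r$-viewpoint, its support is still contained in $[0,c]$. Hence $f\in B_{\alpha,c}^q$.

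For density, given $f\in B_{\alpha,c}^q$, I would approximate by the bounded, compactly supported truncations
\[
g_k:=f\cdot\chi_{[1/k,k]}\cdot\chi_{\{|f|\leq k\}}\in L^p(0,\infty)\cap L^q(0,\infty),
\]
which satisfy $g_k\to f$ in $L^q$ by dominated convergence with dominant $|f|$. Set $f_k:=P_c^\alpha g_k$. Theorem~\ref{thm:projection-cont} places $f_k$ in $L^p(0,\infty)$, and by construction $\mathcal H^\alpha f_k=\chi_{[0,c]}\mathcal H^\alpha g_k$ is supported in $[0,c]$, so $f_k\in B_{\alpha,c}^p$. Using $f=P_c^\alpha f$ once more,
\[
\|f_k-f\|_{L^q(0,\infty)}=\|P_c^\alpha(g_k-f)\|_{L^q(0,\infty)}\lesssim\|g_k-f\|_{L^q(0,\infty)}\to 0.
\]

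The hardest piece is the claim $B_{\alpha,c}^1\subset B_{\alpha,c}^p$ for $1\leq p\leq\infty$, because Theorem~\ref{thm:projection-cont} is not stated for $p=1$. However, inspection of its proof reveals the end-point bound $\|P_c^\alpha f\|_{L^\infty(0,\infty)}\lesssim\|f\|_{L^1(0,\infty)}$, obtained from H\"older on $\chi_{[0,c]}\mathcal H^\alpha f$. For $f\in B_{\alpha,c}^1$ this gives $f=P_c^\alpha f\in L^\infty(0,\infty)$, and the inclusion $L^1\cap L^\infty\subseteq L^p(0,\infty)$ for $1\leq p\leq\infty$ closes the argument; the support of $\mathcal H^\alpha f$ remains in $[0,c]$ by the same distributional consistency used above.
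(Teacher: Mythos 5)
Your proof is correct and follows essentially the same route as the paper: the identity $f=P_c^\alpha f$ plus Theorem~\ref{thm:projection-cont} for the continuous embedding, the continuity of $P_c^\alpha$ applied to an $L^p\cap L^q$ approximating sequence for density, and the $L^1\to L^\infty$ endpoint bound from the proof of Theorem~\ref{thm:projection-cont} together with $L^1\cap L^\infty\subset L^p$ for the last inclusion. The only difference is cosmetic: you construct the approximating sequence $g_k$ explicitly by truncation, where the paper simply invokes the density of $L^p\cap L^q$ in $L^q$.
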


\begin{proof}
If  $f\in B_{\al,c}^p$ one has $P_c^\al f=f$ and consequently $\| f\|_{L^q(0,\infty)}\lesssim \|  f\|_{L^p(0,\infty)}$ by Theorem~\ref{thm:projection-cont}, which shows that $B_{\al,c}^p$ is continuously embedded into  $B_{\al,c}^q$. Furthermore, if $f\in B_{\al,c}^1$, then $f\in B_{\al,c}^p$ as $f\in L^1(0,\infty)\cap L^\infty(0,\infty)\subset L^p(0,\infty)$.

It remains to show that the inclusion is dense. This however follows from the density properties of $L^p(0,\infty)$ and Theorem~\ref{thm:projection-cont}: Take $f\in B^q_{\al,c}$ and a sequence $(f_k)_{k\geq 0}\subset L^p(0,\infty)\cap L^q(0,\infty)$ such that $f_k\rightarrow f$ in $L^q(0,\infty)$. By the continuity of the projection operator, it follows that $(P_{c}^\al f_k)_{k\geq 0}\subset B^p_{\al,c}\cap B^q_{\al,c}$ and 
$$
\|f-P^\al_c f_k\|_{L^q(0,\infty)}=\|P^\al_c(f- f_k)\|_{L^q(0,\infty)}\lesssim \|f-f_k\|_{L^q(0,\infty)}\rightarrow 0.
$$
\hfill\qed
\end{proof}

\noindent Thus, as in Proposition~\ref{prop:pointwise-ae}, it follows from \cite[Theorem~5.6]{bojaso19} that $\Phi_Nf$ converges almost everywhere to $P_{c}^\al f$ for every $f\in L^p(0,\infty)$ if $ \frac{3}{4}<p<4$.

%Setting $q=2$ gives $\|P^\al_c f\|_{L^\infty(0,\infty)}\leq C \|  f\|_{L^2(0,\infty)}$ by Parseval's formula,  and $q=\infty$ yields $\|P^\al_c f\|_{L^\infty(0,\infty)}\leq C \|  f\|_{L^1(0,\infty)}$. It hence follows by the Riesz-Thorin theorem  that $\|P^\al_c f\|_{L^\infty(0,\infty)}\leq C \|  f\|_{L^p(0,\infty)}$ for every $1\leq p\leq2$. Consequently, if  $f\in B_{\al,c}^p$ one has $\| f\|_{L^\infty(0,\infty)}\leq C \|  f\|_{L^p(0,\infty)}$ which shows $B_{\al,c}^p\subset B_{\al,c}^2$. 

%Moreover, \cite[Proposition 1]{va94} states that for $m(\al)<p\leq q<M(\al)$ it holds $B_{\al,c}^p\subset B_{\al,c}^q$, where the inclusion is continuous and dense. 

This leaves us with all ingredients  in place to prove Theorem~\ref{thm:extension2} which is shown along the same line of argument as in the proof of Theorem~\ref{thm:extension}.

%\cite{ghoja11}
%\cite{lev51}

%$$
%\mathcal{S}^+:=\Big\{f\in C^\infty(0,\infty):\ |t^kf^{(n)}(t)|\leq C_{k,n}, \mbox{ for every }k,n\in\N\Big\}
%$$

%\section*{Acknowledgements}
%\noindent M.S. was supported by the Austrian Science Fund (FWF) through an Erwin-Schr{\"{o}}dinger Fellowship (J-4254).

\section*{References}

\bibliographystyle{plain}

\end{document}